\theoremstyle{plain}
\newtheorem{THEOREM}{Theorem}[section]
\newtheorem{PROP}[THEOREM]{Proposition}
\newtheorem{theorem}[THEOREM]{Theorem}
\newtheorem{lemma}[THEOREM]{Lemma}
\theoremstyle{definition}
\theoremstyle{remark}
\newtheorem{remark}[THEOREM]{Remark}
\def \d {\delta}
\def \D {\Delta}
\def \g {\gamma}
\def \f {\varphi}
\def \l {\lambda}
\def \n {\nabla}
\def \cF {\mathcal{F}}
\newcommand{\N}{\ensuremath{\mathbb{N}}}   %%% naturals
\newcommand{\Z}{\ensuremath{\mathbb{Z}}}   %%% integers
\newcommand{\R}{\ensuremath{\mathbb{R}}}   %%% reals
\newcommand{\T}{\ensuremath{\mathbb{T}}}   %%% torus
\def \lan {\langle}
\def \ran {\rangle}
\def \p {\partial}
\def \ra {\rightarrow}
\DeclareMathOperator{\diver}{div} %
\begin{document}

\title[Energy Balance for Density-Dependent NSE]{The Energy Balance Relation for Weak solutions of the Density-Dependent Navier-Stokes Equations}
\author{T.M. Leslie}

\author{R. Shvydkoy}

\email{tlesli2@uic.edu; shvydkoy@uic.edu}

\address{University of Illinois at Chicago, Chicago, IL, 60607}
 
 \thanks{This work was partially supported by NSF grants DMS-1210896 and DMS-1515705}

\begin{abstract}
We consider the incompressible inhomogeneous Navier-Stokes equations with constant viscosity coefficient and density which is bounded and bounded away from zero. We show that the energy balance relation for this system holds for weak solutions if the velocity, density, and pressure belong to a range Besov spaces of smoothness $1/3$. A density-dependent version of the classical K\'arm\'an-Howarth-Monin relation is derived.
\end{abstract}

\subjclass[2010]{76S05,35Q35}
\keywords{Navier-Stokes equation, Onsager conjecture, K\'arm\'an-Howarth-Monin relation, turbulence}

\maketitle

\section{Introduction}
Consider the density-dependent incompressible Navier-Stokes equations:
\begin{equation}
\label{eq:momentum}
\p_t (\rho u) + \diver(\rho u\otimes u) - \mu\Delta u = -\n p + \rho f,
\end{equation}
\begin{equation}
\label{eq:density}
\p_t \rho + \diver (\rho u) = 0,
\end{equation}
\begin{equation}
\label{eq:incompressibility}
\n \cdot u = 0.
\end{equation}
Here $u(x,t)$ represents the $d$-dimensional velocity, $f(x,t)$ is an external force (with values in $\R^d$), $p(x,t)$ is the pressure, $\rho(x,t)$ is the density, and $\mu$ is the viscosity coefficient (which we take to be constant).  We consider \eqref{eq:momentum}-\eqref{eq:incompressibility} for $x\in \T^d$ and $t \geq 0$. It is known, see \cite{Lions,Huang/Wang,Simon}, that if $u_0$ is divergence-free and square-integrable, $\underline{\rho}\le \rho_0 \le \overline{\rho}$ for some positive constants $\underline{\rho}$ and $\overline{\rho}$, and if $f\in L^2([0,T];L^2(\T^d))$, then there exists a Leray-Hopf type global weak solution to the system $(\rho, u)$ such that $\underline{\rho}\le \rho \le \overline{\rho}$,  $u\in L^2([0,T];H^1(\T^d))$, and $(\rho, u)$ satisfies the energy inequality
\begin{equation}
\label{eq:energy_inbalance}
E(t) - E(0) \leq - \mu \int_0^t \|\n u\|_{L^2(\T^d)}^2\,ds + \int_0^t \int_{\T^d} \rho u \cdot f \,dx\,ds,
\quad \text{ where } E(s) = \frac12\int_{\T^d\times \{s\}} \rho|u|^2\,dx.
\end{equation}
Fluids with variable distribution of density arise in many physical contexts. In particular, they appear prominently in Rayleigh-Taylor mixing when a heavier layer fluid on top of lighter one gets mixed under the force of gravity, creating an non-homogeneous turbulent layer. Although an analogue of the classical Kolmogorov theory of turbulence for non-homogeneous fluids has not yet been developed, it appears to be evident that under proper self-similarity assumptions on the velocity increments $\d u = u(r+\ell) - u(r)$ and density $\d\rho$ a limited level of regularity would be expected of $u$ and $\rho$ in the limit of vanishing viscosity. Such regularity should allow for a residual amount of energy to be dissipated in the limit by analogy with the Kolmogorov's $0$th law of turbulence, see \cite{Frisch}. Mathematical study of the question of what this critical regularity might be has been a subject of many recent publications centered around the so-called Onsager conjecture, which states that for the pure Euler equation H\"older exponent $1/3$ gives a threshold regularity between energy conservation and existence of dissipative solutions that do not conserve energy (see \cite{eyink-exam,cet,CCFS,Onsager}). In this paper we address the same question in the context of the full density-dependent forced system \eqref{eq:momentum}-\eqref{eq:incompressibility} with or without viscosity. 

Let us recall that a weak solution to \eqref{eq:momentum}-\eqref{eq:incompressibility} is a triple $(\rho, u, p)\in L^\infty_{t,x} \times L^2_{t,x} \times \mathcal{D}'$ ($\mathcal{D}'$ is the space of distributions) such that for any triple of smooth test functions $(\eta, \psi, \gamma)$, one has
\begin{equation}
\label{eq:wk_momentum}
\begin{split}
\int_{\T^d \times \{s\}} \rho u\cdot \psi\,dx \bigg|^t_0 - \int_0^t \int_{\T^d} \big( \rho u\cdot \p_s \psi + & (\rho u\otimes u):\n \psi +  p\diver \psi \big)\,dx\,ds \\ & = \mu \int_0^t \int_{\T^d} u \cdot \Delta \psi\,dx\,ds +\int_0^t \int_{\T^d} \rho f\cdot \psi\,dx\,ds,
\end{split}
\end{equation}
\begin{equation}
\label{eq:wk_density}
\int_{\T^d\times \{s\}} \rho \eta\,dx \bigg|^t_0  = \int_0^t \int_{\T^d} \left( \rho \p_s \eta +  (\rho u\cdot \n) \eta \right)\,dx\,ds,
\end{equation}
\begin{equation}
\label{eq:wk_incompressibility}
\int_{\T^d} u\cdot \n \g = 0.
\end{equation}
In \eqref{eq:wk_momentum}, we write $A:B$ for $\sum_{i,j=1}^d a_{ij}b_{ij}$, where $A=(a_{ij})$, $B=(b_{ij})$. If $\rho$ and $u$ are smooth, then using  $\psi = u$ we readily obtain the energy balance relation:
\begin{equation}
\label{eq:energy_balance}
E(t) - E(0) = - \mu \int_0^t \|\n u\|_{L^2(\T^d)}^2\,ds + \int_0^t \int_{\T^d} \rho u \cdot f \,dx\,ds.
\end{equation}
In the context of weak solutions even in the class $u\in L^2 H^1$, such a manipulation is not feasible due to lack of sufficient regularity to integrate by parts. This leaves room for additional mechanisms of energy dissipation due to the work of the nonlinear term. In the case $\mu = 0$, due to time reversibility the energy may also increase above the legitimate change resulting from the work of force.  Our main result provides a sharp sufficient regularity condition on $(\rho,u,p)$ to guarantee energy balance \eqref{eq:energy_balance} to hold. We use Besov spaces to state our criteria as motivated by numerous previous studies on Onsager conjecture, \cite{cet,CCFS,eyink-besov}. The definitions are standard and recalled in Section~\ref{s:besov}.

\begin{theorem}
\label{t:energy_bal}
Let $(\rho, u, p)$ be a weak solution to the density-dependent incompressible Navier-Stokes equations on $\T^d$, $d>1$.  Assume $(\rho, u,p)$ satisfies
\begin{align}
&u\in L^2([0,T];H^1(\T^d)), \ 0<\underline{\rho}\le \rho \le \overline{\rho}<\infty, \text{ and } f\in L^2([0,T] \times \T^d), \label{hyp:Onsager1} \\
&\rho\in L^a([0,T];B_{a,\infty}^{\frac13}),\;
u\in L^b([0,T]; B_{b,c_0}^{\frac13}),\;
p\in L^{\frac{b}{2}}([0,T]; B^{\frac13}_{\frac{b}{2},\infty}),\quad \frac1a + \frac3b=1,\; b \ge 3.  \label{hyp:Onsager2}
\end{align}
Then $(\rho, u, p)$ satisfies the energy balance relation \eqref{eq:energy_balance} on the time interval $[0,T]$. 
\end{theorem}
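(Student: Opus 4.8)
The plan is to mollify the equations in the space variable and to extract from them a regularized energy identity whose error terms are governed by \eqref{hyp:Onsager2}. The guiding formal computation is the pointwise identity
\[ u\cdot\big(\p_t(\rho u) + \diver(\rho u\otimes u)\big) = \p_t\big(\tfrac12\rho|u|^2\big) + \diver\big(\tfrac12\rho|u|^2\,u\big), \]
which uses only \eqref{eq:density}; dotting \eqref{eq:momentum} with $u$, integrating over $\T^d$, and invoking \eqref{eq:incompressibility} through $\int_{\T^d} u\cdot\n p = -\int_{\T^d} p\,\diver u = 0$ then gives \eqref{eq:energy_balance}. A weak solution is too rough to justify any of these steps, so I would reproduce them after regularization and control the commutators that appear.

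Writing $g^\e = g*\phi_\e$ for a standard spatial mollifier, the mollified equations $\p_t(\rho u)^\e + \diver(\rho u\otimes u)^\e - \mu\Delta u^\e = -\n p^\e + (\rho f)^\e$ and $\p_t\rho^\e + \diver(\rho u)^\e = 0$ hold in the strong sense in $x$. Since $\rho^\e\ge\underline{\rho}>0$, I introduce the regularized velocity $\tilde u^\e := (\rho u)^\e/\rho^\e$ and the regularized energy $E^\e(t):=\tfrac12\int_{\T^d}|(\rho u)^\e|^2/\rho^\e\,dx$. Because $(\rho u)^\e\to\rho u$ and $\rho^\e\to\rho$, one finds $E^\e(t)\to E(t)$ for a.e.\ $t$, while weak continuity of $t\mapsto\rho u$ handles the endpoints. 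The crucial point is that differentiating $E^\e$ in time involves only $\p_t(\rho u)^\e$ and $\p_t\rho^\e$, for which we have equations, so \emph{no stray $\p_s u^\e$ survives}; inserting both equations yields a viscous term $\mu\int\tilde u^\e\cdot\Delta u^\e$, a forcing term $\int(\rho f)^\e\cdot\tilde u^\e\to\int\rho f\cdot u$, a convective (kinetic) flux, and a pressure term $-\int\tilde u^\e\cdot\n p^\e = \int p^\e\,\diver\tilde u^\e$. This last term does \emph{not} vanish: unlike $u^\e$, the proxy $\tilde u^\e$ fails to be divergence-free once $\rho$ varies, and this is exactly where the hypothesis on $p$ enters.

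Integrating over $[0,t]$ and subtracting \eqref{eq:energy_balance}, the error reduces to two flux commutators. The kinetic flux, in increment (Duchon--Robert) form, is an average against $\n\phi_\e$ of the trilinear expressions $\rho\,\d u\,\d u\,\d u$ and $\d\rho\,\d u\,\d u$; the pressure flux, obtained by writing $\diver\tilde u^\e = \diver\big((\rho u)^\e-\rho^\e u^\e\big)/\rho^\e$ and integrating by parts, is trilinear in $\d\rho$, $\d u$, $\d p$. In each case every increment of a $B^{1/3}$ field contributes a factor $\e^{1/3}$ while $\n\phi_\e$ costs $\e^{-1}$, so the spatial scaling is the borderline $\e^{0}$; the improved summability encoded in $u\in B^{1/3}_{b,c_0}$, as opposed to merely $B^{1/3}_{b,\infty}$, converts this into genuine decay. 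The time integrals then close by H\"older under the single relation $\tfrac1a+\tfrac3b=1$: the cubic velocity term needs $\tfrac3b\le1$ (hence $b\ge3$), the mixed kinetic term uses $\tfrac1a+\tfrac2b<1$, and the pressure flux balances $\tfrac1a+\tfrac1b+\tfrac2b=1$ for $(\d\rho,\d u,\d p)$ — so the Besov regularity of $\rho$ is essential, via the commutator $(\rho u)^\e-\rho^\e u^\e$ of order $\e^{2/3}$, to control the pressure contribution. The viscous term requires the most care: splitting $\tilde u^\e = u^\e + \big((\rho u)^\e-\rho^\e u^\e\big)/\rho^\e$ isolates the clean piece $\mu\int u^\e\cdot\Delta u^\e = -\mu\|\n u^\e\|_{L^2}^2\to-\mu\|\n u\|_{L^2}^2$ from \eqref{hyp:Onsager1}, plus a density-commutator correction that must be shown to vanish.

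The main obstacle is the package of commutator estimates at the critical exponent $1/3$, and two features are genuinely new compared with the constant-density Onsager theorem. First, the pressure no longer drops out by incompressibility, because the density-weighted velocity proxy is not solenoidal, so its Besov regularity is indispensable. Second, the kinetic flux is trilinear \emph{with} a density increment, which is precisely what forces the exponent relation $\tfrac1a+\tfrac3b=1$. The delicate convergence of the viscous correction — reconciling the $L^2H^1$ bound on $u$ with the roughness of $\rho$ — is the most technical point I anticipate. Once all fluxes are shown to vanish as $\e\to0$, the regularized identity converges term by term to \eqref{eq:energy_balance}, completing the proof.
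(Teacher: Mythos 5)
Your proposal is correct and is, at its core, the same argument as the paper's, transposed from frequency space to physical space: where you mollify and form $\tilde u^\e=(\rho u)^\e/\rho^\e$, the paper applies Littlewood--Paley projections and forms $U=(\rho u)_{\le Q}/\rho_{\le Q}$, tests the weak momentum equation with it, and cancels the time-derivative terms using the weak continuity equation applied to $(U^2)_{\le Q}$ --- exactly your observation that no stray $\p_s u$ survives. Every structural point you isolate appears verbatim in the paper: the pressure contribution survives because the proxy velocity is not solenoidal and is estimated via $\int p_{\le Q}\diver U = -\int\n p_{\le Q}\cdot(U-u_{\le Q})$ using the commutator $(\rho u)_{\le Q}-\rho_{\le Q}u_{\le Q}$ of size $\l_Q^{-2/3}D^{1/3}_{a,Q}(\rho)D^{1/3}_{b,Q}(u)$; the kinetic flux is reduced to the trilinear increment term $r_Q(\rho,u,u)=\int\widetilde h_Q(y)\,\d\rho\,\d u\otimes\d u\,dy$ plus lower-order commutators (the paper's Lemma 3.1), matching the K\'arm\'an--Howarth--Monin form it derives as motivation; and the viscous correction is handled by the splitting you describe, with $\l_Q^{-1}\|\Delta u_{\le Q}\|_2\to 0$ a.e.\ in time playing the role of your $\e\|\Delta u^\e\|_2\to 0$ (which does hold for $u\in H^1$ since $\e\n\phi_\e$ has mean zero). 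The main thing the Littlewood--Paley formulation buys is a clean mechanism for exploiting the endpoint class $B^{1/3}_{b,c_0}$: all flux bounds are expressed through the convolution $D^{1/3}_{b,Q}(u)=\sum_q K^{1/3}_{Q-q}d^{1/3}_{b,q}(u)$ against a localized kernel, and \eqref{eq:D&d_comp} converts $d^{1/3}_{b,q}(u)\to 0$ directly into decay of the flux; in your mollifier formulation you would need the analogous (true, but less automatic) statement that the critical increment quantity $\sup_{|y|\le\e}|y|^{-1/3}\|u(\cdot-y)-u(\cdot)\|_b$ has vanishing $\limsup$, plus a word on why the formal time differentiation of $E^\e$ is legitimate for a merely weak solution (the paper sidesteps this by working entirely within the weak formulation). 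These are routine adjustments, not gaps.
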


The assumption on the pressure in \eqref{hyp:Onsager2}  is in natural correspondence to the condition on velocity. In fact, it follows from the latter in the case of constant density (see Remark~\ref{r:pressure}). Such a conclusion, however, cannot be made in the density dependent case when the density has limited regularity as ours. In general the pressure is only known to exist as a distribution. As the proof goes we will see that the first line of assumptions \eqref{hyp:Onsager1} pertains to the control of the viscous and force terms in the local energy budget relation, while \eqref{hyp:Onsager2} is used to control anomalous flux due to the transport term. So, as a byproduct of the proof we obtain energy conservation condition for the Euler equation.

\begin{theorem}\label{t:EE_result} Suppose $(\rho, u, p)$ is a weak solution to the density-dependent incompressible Euler
equations on $\T^d$ with zero force, the same set of assumptions \eqref{hyp:Onsager2}, and $0<\underline{\rho}\le \rho\le \overline{\rho}<\infty$. Then the energy is conserved in time.
\end{theorem}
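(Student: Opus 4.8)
The plan is to prove Theorem~\ref{t:EE_result} as a direct corollary of the mechanism developed for Theorem~\ref{t:energy_bal}, simply specializing to $\mu=0$ and $f=0$. The central idea is to mollify in space and test the momentum equation against a regularized velocity, then track the anomalous energy flux as the mollification scale $\ell\to 0$. Since the Euler case discards both the viscous term and the force term, the entire burden rests on showing that the transport contribution to the energy budget vanishes in the limit, which is exactly what assumption~\eqref{hyp:Onsager2} is designed to control.

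Let me sketch the construction. For a standard spatial mollifier $\varphi_\ell$, write $g^\ell = g * \varphi_\ell$. First I would use $\psi = (\rho u)^\ell * \varphi_\ell$ (or more cleanly $\psi = u^\ell$ paired with appropriate smoothing of the momentum) as a test function in the weak formulation~\eqref{eq:wk_momentum}. This produces a regularized energy identity in which the kinetic energy $\tfrac12\int \rho|u|^2\,dx$ appears up to commutator errors, and the only surviving nonlinear contribution is a flux term of the schematic form
\begin{equation}
\Pi_\ell(s) = \int_{\T^d} \big( (\rho u \otimes u)^\ell - \rho^\ell (u\otimes u)^\ell \big) : \nabla u^\ell \,dx,
\end{equation}
together with boundary-in-time terms that converge to $E(t)-E(0)$ by the $L^\infty$ bound on $\rho$ and the $L^2_{t,x}$ regularity of $u$. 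The heart of the matter is the estimate $\int_0^T |\Pi_\ell(s)|\,ds \to 0$ as $\ell\to 0$. This is precisely the density-dependent Constantin--E--Titi commutator estimate: the trilinear flux is bounded using the Besov increments of $\rho$ and $u$. Writing the standard commutator decomposition in terms of increments $\delta_y\rho$, $\delta_y u$ over the mollification kernel, one controls $\Pi_\ell$ by a product of three increment norms, two of $u$ and one of $\rho$, yielding a bound of the form
\begin{equation}
\int_0^T |\Pi_\ell(s)|\,ds \lesssim \ell^{\,3\cdot \frac13 - 1} \, \|\rho\|_{L^a B_{a,\infty}^{1/3}} \, \|u\|_{L^b B_{b,c_0}^{1/3}}^2,
\end{equation}
where the H\"older pairing in time uses exactly $\tfrac1a + \tfrac3b = 1$, and the spatial exponents combine so that the net power of $\ell$ is $\ell^0$ multiplied by a genuinely vanishing factor coming from the summability index $c_0$ in the Besov norm of $u$ (this is why the sharp endpoint requires $c_0$ rather than $\infty$ in the velocity space). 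The role of $b\ge 3$ is to guarantee the three H\"older exponents are admissible.

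The main obstacle I anticipate is the treatment of the pressure and its interaction with the mollification, even though in the Euler case there is no viscous term. Although the pressure term $\int p^\ell \,\diver u^\ell\,dx$ vanishes for divergence-free $u$, one must justify this rigorously at the mollified level: the commutator between mollification and the divergence constraint generates a pressure-flux term requiring the Besov hypothesis on $p$, and one must verify it too vanishes as $\ell\to 0$ under~\eqref{hyp:Onsager2}. Establishing that all these commutators converge to zero in $L^1_t$ simultaneously, and that the regularized energy genuinely converges to the physical energy at the time endpoints (which needs the strong continuity of $\rho|u|^2$ in an appropriate weak-$*$ sense, obtained from the density and velocity regularity), is the delicate part. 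Once these convergences are in hand, passing $\ell\to 0$ in the regularized identity yields $E(t)-E(0)=0$, completing the proof. Since all of this machinery is built in the proof of Theorem~\ref{t:energy_bal}, I would present Theorem~\ref{t:EE_result} by citing that argument and observing that setting $\mu=0$, $f=0$ removes precisely the terms that relied on~\eqref{hyp:Onsager1}, leaving a conclusion that depends only on~\eqref{hyp:Onsager2}.
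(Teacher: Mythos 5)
Your high-level plan --- obtain Theorem~\ref{t:EE_result} by specializing the machinery of Theorem~\ref{t:energy_bal} to $\mu=0$, $f=0$, so that only the nonlinear flux survives --- is exactly how the paper proceeds (it literally derives Theorem~\ref{t:EE_result} as a corollary of the flux-vanishing theorem, Theorem~\ref{t:flux_to0}, plus dominated convergence). The choice of physical-space mollification versus the paper's Littlewood--Paley projection $(\cdot)_{\le Q}$ is essentially cosmetic. However, your sketch of the mechanism itself has a genuine gap at its most delicate point: the derivation of the regularized energy identity. Testing \eqref{eq:wk_momentum} against $u^\ell$ (or a symmetrized mollification of it) does \emph{not} produce $\frac{d}{dt}\frac12\int\rho|u|^2$ up to controllable commutators, because the time-derivative term $\int \partial_t(\rho u)\cdot u^\ell$ does not close into an exact derivative when the density is variable. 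The paper's resolution is structural: it tests with $U_{\le Q}$ where $U=(\rho u)_{\le Q}/\rho_{\le Q}$, defines the regularized energy as $E_{\le Q}=\frac12\int |(\rho u)_{\le Q}|^2/\rho_{\le Q}$, and then uses the weak continuity equation \eqref{eq:wk_density} a second time (applied to $\rho(U^2)_{\le Q}$) to rewrite $E_{\le Q}\big|_0^t$ and cancel the $\partial_s U$ contribution. Without an analogue of this step your ``regularized energy identity in which the kinetic energy appears up to commutator errors'' does not exist in the form you assert.

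Two consequences follow. First, the flux you write down, $\int\bigl((\rho u\otimes u)^\ell-\rho^\ell(u\otimes u)^\ell\bigr):\nabla u^\ell\,dx$, is not the commutator that actually arises; the correct one is $F_Q=(\rho u\otimes u)_{\le Q}-U\otimes(\rho u)_{\le Q}$ contracted with $\nabla U$ (not $\nabla u_{\le Q}$), and estimating $\nabla U$ is where the density regularity $D^{1/3}_{a,Q}(\rho)$ genuinely enters, via $\nabla U=\rho_{\le Q}^{-1}\nabla(\rho u)_{\le Q}-\rho_{\le Q}^{-2}(\rho u)_{\le Q}\otimes\nabla\rho_{\le Q}$. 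Second, your treatment of the pressure is internally inconsistent: with test function $u^\ell$ the pressure term vanishes identically (and then no hypothesis on $p$ would be needed), whereas in the correct setup $\diver U\neq 0$ and the term $\int p_{\le Q}\diver U\,dx=-\int\nabla p_{\le Q}\cdot(U-u_{\le Q})\,dx$ is genuinely present; it is bounded by $D^{1/3}_{a,Q}(\rho)D^{1/3}_{b,Q}(u)D^{1/3}_{b/2,Q}(p)$, which is why the Besov hypothesis on $p$ appears in \eqref{hyp:Onsager2}. You flag this as an ``anticipated obstacle'' but do not resolve it, and resolving it is not optional: it is one of the two components of the flux. Your counting of exponents ($\frac1a+\frac3b=1$, net power $\ell^0$, vanishing supplied by the $c_0$ summability of $u$) is correct and matches the paper's conclusion $|\Pi_Q|\lesssim D^{1/3}_{b,Q}(u)\bigl[\dots\bigr]$, but the identity that this estimate is applied to still needs to be built.
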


In the case when $b = 3$, we obtain the earlier obtained results in the homogeneous case, see \cite{CCFS}. However, in this case one must assume a rather strong regularity on the density: $\rho \in B^{1/3}_{\infty,\infty} = C^{1/3}$, the usual H\"older class. It is shown in \cite{eyink-exam} that Besov space $u\in B^{1/3}_{3,\infty}$ is sharp to control the energy flux in homogeneous fluid. It is therefore not expected to be improved in the above results. We also derive an extension to the density-dependent case of the classical 
K\'arm\'an-Howarth-Monin relation for the energy flux due to nonlinearity in the statistically homogeneous turbulence. It suggests that any of the conditions in the range of \eqref{hyp:Onsager2} arise naturally.

\section{Preliminaries and Preparations for the Main Theorem}

In \cite{CCFS} it was shown that if $u\in L^3([0,T]; B^{1/3}_{3,c_0}(\R^d))\cap C_w([0,T]; L^2(\R^d))$ is a weak solution to the (homogeneous) incompressible Euler equations, then $u$ conserves energy.  %(Here $B^{1/3}_{3,c_0}$ denotes a certain Besov space, described below.)  
The authors define an energy flux $\Pi_Q(t)$ describing the energy dissipated from scales associated to wave numbers $\l_q=2^q$ for $-1\le q\le Q$. To prove their result, they bound $\Pi_Q(t)$ using the convolution of a sequence involving the Littlewood-Paley projections of the solution $u$ with a localization kernel; they conclude by noting that their bound tends to zero in the limit.  We follow a similar program in this section.  After motivating our use of Besov spaces by generalizing the K\'arm\'an-Howarth-Monin relation to the present context, we recall the definition of a Besov space and set some notation.  Next, we derive an energy budget relation associated to the density-dependent Navier-Stokes equations.  Finally, we define localization kernels and present some estimates that will streamline the proof of our theorem.

\subsection{K\'arm\'an-Howarth-Monin relation}

Let us motivate the use of Besov spaces and the choice of regularity classes by ideas from the turbulence theory. Our immediate goal is to extend the classical K\'arm\'an-Howarth-Monin relation to the density-dependent case, see \cite{Frisch}. Let us suppose that our fluid reached a state of fully developed turbulence in which statistical laws with respect to an ensemble average $\lan \cdot \ran$ are independent of a location in space where are measured\footnote{The common term \emph{homogeneous turbulence} may be misleading in our settings as our density still remains variable.}. In order to measure how much regularity is needed to control the energy flux we derive a formula for the physical space energy flux due to the nonlinear transport term defined by
\[
\pi(\ell) = \frac14 \p_t \lan u(r+\ell) \cdot u(r)(\rho(r+\ell) + \rho(r)) \ran_{\mathrm{T}}.
\]
Note that it coincides with the classical flux in the case when $\rho$ is constant, and it is symmetric with respect to $r+\ell, r$. Let us use the notation $u_i = u_i(r)$, $u_i' = u_i(r+\ell)$, $\p_i = \frac{\p}{\p r_i}$, $\p'_i = \frac{\p}{\p \ell_i}$. From the transport term in the momentum equation \eqref{eq:momentum} we obtain
\begin{equation}\label{}
\begin{split}
- 4\pi(\ell)  = \lan \p_j( \rho' u_j' u_i' )u_i \ran + \lan \rho' u_i' \p_j( u_j u_i) \ran + \lan \p_j( \rho u_j u_i )u'_i \ran + \lan \rho u_i \p_j( u'_j u'_i) \ran .
\end{split}
\end{equation}
Note that $\p_j( \rho' u_j' u_i' ) = \p'_j( \rho' u_j' u_i' )$, and 
$\lan \p'_j( \rho' u_j' u_i' ) u_i \ran = \p'_j \lan \rho' u_j' u_i' u_i \ran$. Similarly, $\lan \rho u_i \p_j( u'_j u'_i) \ran = \p'_j \lan \rho u_i u'_j u'_i \ran$. As to the two terms in the middle we first perform integration by parts. This can be justified by first averaging over the fluid domain $\T^d$. Since the ensembles are independent of $r$, this does not change the quantities. Then switching the order of averaging, integrating by parts, switching again, and un-averaging produces the result. So, 
$\lan \rho' u_i' \p_j( u_j u_i) \ran = - \lan \p_j(\rho' u_i') u_j u_i \ran = -\p_j' \lan \rho' u_i' u_j u_i \ran$, and similarly,
$\lan \p_j( \rho u_j u_i )u'_i \ran  = - \lan \rho u_j u_i \p_j(u'_i) \ran  = - \p'_j \lan \rho u_j u_i u'_i \ran$. We thus obtain
\begin{equation}\label{}
4\pi(\ell) = - \p'_j \lan \rho' u_j' u_i' u_i \ran +\p_j' \lan \rho' u_i' u_j u_i \ran + \p'_j \lan \rho u_j u_i u'_i \ran - \p'_j \lan \rho u_i u'_j u'_i \ran.
\end{equation}
Let us denote $\d u(\ell) = u(r+\ell) - u(r)$, and similar for $\rho$. The expression on the right can be shown to equal
%\[
%-\n_\ell \cdot \lan \d \rho \d u (u\cdot \d u) \ran - \n_\ell \cdot \lan \d \rho |\d u|^2 \d u \ran-\n_\ell \cdot \lan \rho |\d u|^2 \d u\ran.
%\]
\begin{equation}
\label{eq:KHM_gradl}
-\n_\ell \cdot \lan (\d(\rho u) \cdot \d u) \d u\ran.
\end{equation}
This can be proved directly by breaking the above into individual terms and noting that $\lan \rho' u_j' u_i'u_i' \ran = \lan \rho u_j u_iu_i \ran$ are independent of $\ell$, and $\p'_j \lan \rho u_i u_i u'_j \ran = 0$ by the divergence-free condition, and  $\p'_j \lan \rho' u_i' u_i' u_j \ran = 0$ by the same reason after changing $r \to r-\ell$.  Applying the algebraic identity
$\delta(fg) = \frac12[(f+f')\d g + (g+g')\d f]$ to \eqref{eq:KHM_gradl}, we obtain
%Finally, let us note that by homogeneity $\pi(\ell) = \pi(-\ell)$. Performing now the same computation with $-\ell$, and making change in the obtained averages from $r$ to $r+\ell$ we obtain
%\[
%4\pi(\ell) = -\n_\ell \cdot \lan \d \rho \d u (u(r+\ell)\cdot \d u) \ran +\n_\ell \cdot \lan \d \rho |\d u|^2 \d u \ran-\n_\ell \cdot \lan \rho(r+\ell) |\d u|^2 \d u\ran.
%\]
%Averaging the two  obtained formulas, the middle terms cancel and we arrive at
\begin{equation}\label{}
\pi(\ell) = -\frac18 \n_\ell \cdot \lan \d \rho \d u ((u(r+\ell)+u(r))\cdot \d u) \ran - \frac18 \n_\ell \cdot \lan (\rho(r+\ell) +\rho(r))|\d u|^2 \d u\ran.
\end{equation}

This is a direct generalization of the classical K\'arm\'an-Howarth-Monin relation. We note that is it seen from this relation that in order for the flux to vanish there are a few possibilities in terms of distribution of smoothness and integrability between $\rho$ and $u$. Given that $\rho \in L^\infty$ is a natural assumption, the last term vanishes if $u$ is $1/3$ regular in $L^3$-sense. Then for the first term to vanish one must also have $u$ being $1/3$ regular in $L^b$-sense and $\rho$ being $1/3$ regular in $L^a$-sense, where $\frac1a + \frac3b = 1$. This leads to the use of Besov spaces and suggests that the set of assumptions \eqref{hyp:Onsager2} is sharp.

\subsection{Besov spaces via Littlewood-Paley decomposition}\label{s:besov}
We follow the setup of \cite{Chesk/Shvyd} and \cite{CCFS} in defining the Littlewood-Paley projections of the functions $\rho, u, p$.  Fix $\chi\in C_0^\infty(B(0,1))$ such that $\chi(\xi)=1$ for $|\xi|\le \frac{1}{2}$. Define $\phi(\xi) = \chi(\frac{\xi}{2}) - \chi(\xi)$.  Define length scales $\l_q = 2^q$, and define $\f_{-1}(\xi) = \chi(\xi)$, $\f_q(\xi) = \phi(\l_q^{-1} \xi)$ for $q\in \N\cup\{0\}$. Then $\sum_{q=-1}^\infty \f_q \equiv 1$; in particular $\sum_{q=-1}^\infty \f_q(k) = 1$ for all $k\in \Z^d$.  We do not distinguish notationally between $\f_q$ and its restriction to the integer lattice, but occasionally it will be necessary to interpret $\f_q$ in the latter sense.  Note that $\f_q$, $\f_r$ have disjoint supports unless $r\in \{q-1, q, q+1\}$.  Let $\cF$ and $\cF^{-1}$ denote the Fourier transform and inverse transform for $\T^d$: $\cF(f)(k) = \int_{\T^d} f(x)e^{-2\pi i k\cdot x}\,dx$, $\cF^{-1}(g)(x) = \sum_{k\in \Z^d} g(k)e^{2\pi i k\cdot x}\,dx$.  

Define the following functions:
\begin{align*}
& h_q = \cF^{-1}(\f_q),
\;\;\;\;\;
\widetilde{h}_Q=\cF^{-1}(\chi(\l_{Q+1}\,\cdot\,)), \\
& u_q = \cF^{-1}(\f_q\cF u)= h_q*u,
\;\;\;\;\; u_{\le Q} = \sum_{q=-1}^Q u_q = \cF^{-1}(\chi(\l^{-1}_{Q+1}\,\cdot\,)\cF u) = \widetilde{h}_Q*u,\\
& u_{\sim Q} = \sum_{q=Q-2}^{Q+2} u_q,
\;\;\;\;\;
u_{>Q} = \sum_{q=Q+1}^\infty u_q.
\end{align*}

Write $A:=\N \cup \{0,-1\}$. The Besov space $B^s_{p,r}(\T^d)$ ($s\in \R$, $p,r\in [1,\infty]$) is the space of tempered distributions $u$ whose corresponding norm, defined by
\[
\|u\|_{B^s_{p,r}(\T^d)} = \left\| (\l_q^s \|u_q\|_{L^p(\T^d)})_{q\in A} \right\|_{\ell^r(A)},
\]
is finite. Clearly $B^s_{p,r}(\T^d)\subset B^{s'}_{p',r'}(\T^d)$ for $s'\le s$, $p'\le p$, $r'\ge r$. Furthermore, $B_{a,\infty}^s\subset L^a$ for all $a\in [1,\infty)$, $s>0$.
%The Bernstein inequalities
%\begin{equation}
%\label{eq:Bernstein_ineq}
%\|u_q\|_{L^b(\T^d)} \lesssim \l_q^{d\left(\frac{1}{a} - \frac{1}{b}\right)} \|u_q\|_{L^a(\T^d)}, 
%\;\;\;\;\;
%(b\ge a\ge 1)
%\end{equation}
%give us additionally (for $b\ge a\ge 1$) the continuous embeddings
%\begin{align}
%\label{eq:Besov_emb1}
%B^s_{a,r}(\T^d) & \subset B_{b,r}^{s-d\left(\frac{1}{a} - \frac{1}{b}\right) }(\T^d), \\
%\label{eq:Besov_emb2}
%B^0_{a,2}(\T^d) & \subset L^a(\T^d), \;\;(a\ge 2).
%\end{align}
We define $B^s_{p,c_0}(\T^d)$ to be the space of tempered distributions $u$ such that $\l_q^s\|u_q\|_{L^p(\T^d)} \stackrel{q\to \infty}{\longrightarrow}0$, together with the norm inherited from $B^s_{p,\infty}(\T^d)$.  Note that this space contains $B^s_{p,r}(\T^d)$ for all $r\in [1,\infty)$.  We will write $B^s_{p,r}$ for $B^s_{p,r}(\T^d)$ unless the abbreviation could cause confusion.

\subsection{Derivation of the Energy Budget Relation}
Define $E_{\le Q}(s):= \frac{1}{2}\int_{\T^d\times \{s\}} \frac{(\rho u)_{\le Q}^2}{\rho_{\le Q}}\,dx$, the energy associated to length scales $\l_q$ for $q\le Q$.  Defining $U=\frac{(\rho u)_{\le Q}}{\rho_{\le Q}}$ and putting $\psi=U_{\le Q}$ in \eqref{eq:wk_momentum}, we see that 
\begin{equation}
\label{eq:energy_via_wk_momentum}
\begin{split}
2 E_{\le Q}(s)\big|^t_0 = & \int_0^t \int \left( (\rho u)_{\le Q} \cdot \p_s U + (\rho u\otimes u)_{\le Q} : \n U + p_{\le Q}\diver U\right)\,dx\,ds \\ & - \mu\int_0^t \int \n u_{\le Q} : \n U\,dx\,ds + \int_0^t \int (\rho f)_{\le Q}\cdot U.
\end{split}
\end{equation}
On the other hand, we can rewrite the definition of $E_{\le Q}$ using the weak form of the density transport equation.  We apply \eqref{eq:wk_density} in passing from the first to the second line below:
\begin{align*}
E_{\le Q}(s)\big|^t_0 
&=\frac{1}{2}\int_{\T^d\times \{s\}}\rho_{\le Q} U^2\,dx\bigg|^t_0
 =\frac{1}{2}\int_{\T^d\times \{s\}}\rho(U^2)_{\le Q}\,dx\bigg|^t_0\\
&= \frac{1}{2}\int_0^t \int \left( \rho \p_s(U^2)_{\le Q} + (\rho u\cdot \n)(U^2)_{\le Q}\right)\,dx\,ds \\
& = \frac{1}{2}\int_0^t \int \left( \rho_{\le Q} \p_s(U^2) + ((\rho u)_{\le Q}\cdot \n)(U^2)\right)\,dx\,ds \\
& = \int_0^t \int \left( (\rho u)_{\le Q} \cdot \p_s U + ((\rho u)_{\le Q} \otimes U ):\n U \right)\,dx\,ds 
\end{align*}
Subtracting the result from \eqref{eq:energy_via_wk_momentum}, we obtain the energy budget relation at scales $q\le Q$:
\begin{equation}
\label{eq:energy_budget}
E_{\le Q}(t) - E_{\le Q}(0) = \int_0^t \Pi_Q(s)\,ds - \varepsilon_Q(t) + \int_0^t \int (\rho f)_{\le Q} \cdot U\,dx\,ds.
\end{equation}
Here $\Pi_Q(s)$ is the flux through scales of order $Q$ due to the nonlinearity and the pressure, defined by 
\begin{equation} 
\label{eq:energy_flux}
\Pi_Q = \int F_Q(\rho, u):\n U\,dx + \int p_{\le Q} \diver U\,dx,
\end{equation}
\begin{equation}
\label{eq:commutator}
F_Q(\rho, u) = (\rho u \otimes u)_{\le Q} - U\otimes (\rho u)_{\le Q}, 
\end{equation}
and $\varepsilon_Q$ and $\int_0^t \int (\rho f)_{\le Q}\cdot U\,dx\,ds$ represent the energy dissipation due to heat loss and the external force, respectively, at scales $q\le Q$. Now $\varepsilon_Q$ is given by 
\begin{equation*}
\label{eq:heat_dissip_Q}
\varepsilon_Q(t) = \mu \int_0^t \int \n u_{\le Q} : \n U\,dx\,ds.
\end{equation*}
Also denote 
\begin{equation*}
\label{eq:heat_dissip}
\varepsilon(t) = \mu \int_0^t \|\n u\|_2^2\,ds.
\end{equation*}
We aim to show that for appropriate $(\rho, u, p)$ and all $t\in [0,T]$, we have (as $Q\to \infty$) that $E_{\le Q}(t)\to E(t)$, $\int_0^t \Pi_Q(s)\,ds\to 0$, $\varepsilon_Q(t)\to \varepsilon(t)$, and $\int_0^t\int (\rho f)_{\le Q} \cdot U\,dx\,ds \to \int_0^t \int \rho u\cdot f\,dx\,ds$.  These convergences will immediately imply that \eqref{eq:energy_balance} holds for $(\rho, u, p)$.

\subsection{The Localization Kernel and Estimates on the Littlewood-Paley Projections}
Let $a,b\in [1,\infty]$, $s\in (0,1]$, and let $f$ and $g$ be real-valued functions. Define the following:
\[
K^s_q = \left\{ \begin{array}{lcl}
			\l_q^{s-1}, 	 & & q\ge 0; \\
			\l_q^{s},		 & & q<0;
					\end{array}\right.
\hspace{5 mm}
d_{a,q}^s (f) = \l_q^s\|f_q \|_{L^a};
\hspace{5 mm}
D_{a,Q}^s(f) = \sum_{q=-1}^\infty K^s_{Q-q} d^s_{a,q}(f).
\]
We can define these expressions analogously for the vector-valued $f$ and/or $g$.  If both take values in $\R^d$, then the scalar product $fg$ becomes $f\otimes g$.  Note also that in view of summability of the kernel we have 
\begin{equation}
\label{eq:D&d_comp}
\limsup_{Q\ra \infty} D^s_{a,Q}(f) \sim \limsup_{q\to \infty} d^s_{a,q}(f)
\end{equation}
where the similarity constants depend only on $s$. 

\begin{PROP}\label{p:biest}
For $f\in B^s_{a,\infty}$, $g\in B^t_{b,\infty}$, $a,b\in [1,\infty)$, $s,t\in (0,1)$, we have the following estimates:
\begin{align}
\label{ineq:commutator}
	\|(fg)_{\le Q} - f_{\le Q} g_{\le Q} \|_{c}
	& \lesssim \l_Q^{-s-t} D^s_{a,Q}(f) D^t_{b,Q}(g), \quad \frac1c = \frac1a + \frac1b , \\
\label{e:endpoint}
\|(fg)_{\le Q} - f_{\le Q} g_{\le Q} \|_{a} & \lesssim \l_Q^{-s} D_{a,Q}^s(f) \|g\|_\infty\\
\label{ineq:n_f_le}
	\| \n f_{\le Q} \|_a 
		& \lesssim \l_Q^{1 - s} D^s_{a,Q}(f),\\
\label{ineq:>Q}
\|f_{>Q}\|_a & \le \l_Q^{-s} D_{a,Q}^s(f).
\end{align}
\end{PROP}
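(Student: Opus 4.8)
The plan is to prove each of the four estimates by decomposing the products through the Littlewood-Paley pieces and summing against the kernel $K^s_q$. The unifying mechanism is that a commutator like $(fg)_{\le Q} - f_{\le Q}g_{\le Q}$ has no frequencies concentrated at scales much larger than $\l_Q$, so the only surviving contributions come from pairs of frequencies that are either both comparable to $\l_Q$ or both large; this is exactly what the kernel $K^s_q$ is designed to encode.

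For the commutator estimate \eqref{ineq:commutator}, I would begin with the standard Bony-type paraproduct splitting. Write the product $fg$ using the telescoping identity for the low-pass filters: a convenient route is to use the algebraic identity $(fg)_{\le Q} - f_{\le Q}g_{\le Q} = \big((f - f_{\le Q})(g - g_{\le Q})\big)_{\le Q} - (f - f_{\le Q})_{\le Q}\,g_{\le Q} - f_{\le Q}\,(g-g_{\le Q})_{\le Q} + \text{(frequency-localized remainders)}$, or more cleanly to split into the three paraproducts $f_{\le Q}g_{>Q}$-type, $f_{>Q}g_{\le Q}$-type, and high-high $\sum_{q,q'} f_q g_{q'}$ interactions surviving the low-pass. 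In each piece I would apply H\"older with $\frac1c = \frac1a + \frac1b$, estimate each Littlewood-Paley block by $\|f_q\|_{L^a} = \l_q^{-s} d^s_{a,q}(f)$ and $\|g_{q'}\|_{L^b} = \l_{q'}^{-t} d^t_{b,q'}(g)$, and then recognize the resulting double sum over $q,q'$ as a convolution of the sequences $(d^s_{a,q})$ and $(d^t_{b,q'})$ against powers of $\l$. The powers of $\l_Q$ that factor out should assemble into $\l_Q^{-s-t}$, while the remaining geometric sums over the relative scales $Q-q$ and $Q-q'$ reproduce exactly $K^s_{Q-q}$ and $K^t_{Q-q'}$, yielding $D^s_{a,Q}(f)D^t_{b,Q}(g)$. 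The key bookkeeping is that for $q \le Q$ the kernel grows like $\l_{Q-q}^{s}$ (the $q<0$ branch relative to the difference) whereas for $q > Q$ it decays like $\l_{Q-q}^{s-1}$, and these are precisely the rates that make the low-high and high-high sums converge.

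The remaining three estimates are variations on the same theme and should follow more quickly. For the endpoint \eqref{e:endpoint} I would simply put $g$ in $L^\infty$ rather than decomposing it, so only the $d^s_{a,q}(f)$ factors and the $K^s$ kernel survive, dropping the $\l_Q^{-t}D^t_{b,Q}(g)$ factor in favor of $\|g\|_\infty$; this is the degenerate case $t=0$, $b=\infty$ of the same argument, and the only care needed is to check that the paraproduct pieces involving $g_{\le Q}$ and $g_{>Q}$ are both controlled by $\|g\|_\infty$ after summation. The gradient bound \eqref{ineq:n_f_le} is direct: write $\n f_{\le Q} = \sum_{q \le Q} \n f_q$, use the Bernstein inequality $\|\n f_q\|_{L^a} \lesssim \l_q \|f_q\|_{L^a} = \l_q^{1-s} d^s_{a,q}(f)$, and sum, where the extra factor $\l_q$ converts the low-frequency part of the kernel so that the dominant contribution is $\l_Q^{1-s}D^s_{a,Q}(f)$. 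The tail estimate \eqref{ineq:>Q} is the simplest: $\|f_{>Q}\|_{L^a} \le \sum_{q>Q}\|f_q\|_{L^a} = \sum_{q>Q}\l_q^{-s}d^s_{a,q}(f)$, and factoring out $\l_Q^{-s}$ leaves a sum $\sum_{q>Q}\l_{Q-q}^{-s}d^s_{a,q}(f)$ whose kernel $\l_{Q-q}^{-s} = K^s_{Q-q}$ (for $q>Q$) is summable, giving precisely $\l_Q^{-s}D^s_{a,Q}(f)$.

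The main obstacle I anticipate is the convergence of the high-high interaction sum in \eqref{ineq:commutator}: unlike the low-high terms, where one index is pinned near $Q$, the high-high term involves two indices $q,q'$ both exceeding $Q$ (or both near $Q$) that can range to infinity, and one must verify that the combination of Bernstein gains and the kernel decay $K^s_{Q-q} = \l_{Q-q}^{s-1}$ with exponent $s-1 < 0$ actually produces an absolutely convergent double sum dominated by the stated product of $D$-norms. Getting the exponents to match so that $\l_Q^{-s-t}$ is the exact prefactor, with all residual geometric series converging because $s,t \in (0,1)$, is where the bulk of the care lies; the definition \eqref{eq:D&d_comp} and the piecewise form of $K^s_q$ are precisely tuned so this works, so the task is really to carry out the summation faithfully rather than to discover a new idea.
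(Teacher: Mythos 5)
Your plan for \eqref{ineq:n_f_le} and \eqref{ineq:>Q} is correct and essentially what the paper does. For the commutator bounds \eqref{ineq:commutator} and \eqref{e:endpoint}, however, there is a genuine gap, and it is not where you locate it. You flag the high--high interactions as the main obstacle, but those are in fact the easy terms: plain H\"older gives $\|f_{>Q}g_{>Q}\|_c\le\|f_{>Q}\|_a\|g_{>Q}\|_b\le \l_Q^{-s-t}D^s_{a,Q}(f)D^t_{b,Q}(g)$ directly from \eqref{ineq:>Q}. The terms that actually fail under your scheme are the low--high interactions where one factor sits at frequency $\l_r\ll\l_Q$. Take, say, $(f_{\sim Q}\,g_r)_{\le Q}$ with $r\ll Q$: estimating ``each Littlewood--Paley block by $\|g_r\|_{L^b}=\l_r^{-t}d^t_{b,r}(g)$'' and summing gives a contribution of size $\l_Q^{-s}D^s_{a,Q}(f)\sum_{r\le Q}\l_r^{-t}d^t_{b,r}(g)\lesssim \l_Q^{-s}\|f\|_{B^s_{a,\infty}}\|g\|_{B^t_{b,\infty}}$, which is short of the target by the full factor $\l_Q^{-t}$; the kernel cannot absorb this, since for $r\le Q$ one would need $\l_r^{-t}\lesssim \l_Q^{-t}K^t_{Q-r}=\l_Q^{-1}\l_r^{1-t}$, i.e.\ $\|g_r\|_b\lesssim\l_Q^{-1}\l_r\|g_r\|_b$, which is false for $r<Q$. (Relatedly, you have the two branches of $K^s$ swapped: for $q\le Q$ one has $Q-q\ge 0$ and $K^s_{Q-q}=\l_{Q-q}^{s-1}$, the decaying branch, while for $q>Q$ one gets $\l_{Q-q}^{s}$.) Recovering the missing $\l_Q^{-1}\l_r$ requires exploiting a cancellation — either a commutator estimate of the form $\|[\widetilde h_Q\ast,\,g_r]f_p\|\lesssim\l_Q^{-1}\|\n g_r\|\,\|f_p\|$, or the observation that the low-frequency factor is nearly constant on the scale $\l_Q^{-1}$ of the mollifier — and your proposal never supplies this step; your candidate ``algebraic identity'' also does not hold as written (the hidden ``frequency-localized remainders'' contain exactly the problematic uncancelled pieces). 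The same defect appears in your treatment of \eqref{e:endpoint}: the piece $f_r g_{\sim Q}$ with $r\ll Q$ contributes $\|f_r\|_a\|g\|_\infty$, and summing over $r$ gives $O(1)$, not $O(\l_Q^{-s})$.

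The paper avoids all of this by packaging the cancellation once and for all in the quadratic-increment identity $(fg)_{\le Q}-f_{\le Q}g_{\le Q}=r_Q(f,g)-f_{>Q}g_{>Q}$, where $r_Q(f,g)=\int\widetilde h_Q(y)\,(f(\cdot-y)-f(\cdot))(g(\cdot-y)-g(\cdot))\,dy$ involves increments of \emph{both} factors; each increment is then bounded by $(\l_Q|y|+1)\l_Q^{-s}D^s_{a,Q}(f)$ (using Bernstein for $q\le Q$ and the tail bound for $q>Q$), and the $y$-integral against $|\widetilde h_Q|$ is uniformly bounded. If you want to keep the paraproduct route, you must insert the commutator lemma above into every low--high term; note that $\l_Q^{-1}\l_r^{1-t}d^t_{b,r}(g)=\l_Q^{-t}K^t_{Q-r}d^t_{b,r}(g)$, so this is precisely what regenerates the kernel $K^t$. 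Without one of these two mechanisms the proof does not close.
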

\begin{remark}
Let us note that \eqref{e:endpoint} is still meaningful when $s=1$. However, in this case, the kernel is not localized in the region $q>0$, which meets finitely many terms in the convolution $D$. Nonetheless, uniform bounds on the convolution would be applicable under stronger summability assumption on Littlewood-Paley components of $f$. For example, when $a = 2$ and $f\in H^1$ we clearly have
\[
D_{2,Q}^1(f) \leq \| f\|_{H^1}.
\]
\end{remark}
\begin{proof}
Since
\[
\widetilde{h}_Q * f = f_{\le Q},
\hspace{5 mm}
\int \widetilde{h}_Q(y)\,dy = 1,
\]
we can write 
\[
(fg)_{\le Q} - f_{\le Q} g_{\le Q} = r_Q(f,g) - f_{>Q}g_{>Q},
\]
where
\begin{equation}
\label{eq:remainder2}
r_Q(f,g) = \int \widetilde{h}_Q(y)(f(\cdot-y)-f(\cdot))(g(\cdot-y)-g(\cdot))\,dy.
\end{equation}
Therefore, to prove \eqref{ineq:commutator} it suffices to estimate $r_Q(f,g)$, $f_{>Q}$, $g_{>Q}$ appropriately.

We can write
\[
\|f_{>Q}\|_a
	\lesssim \l_Q^{-s}\sum_{q > Q} \l_{Q-q}^s \l_q^s \|f_q\|_a = \l_Q^{-s} \sum_{q>Q} K_{Q-q}^s d_{a,q}^s(f)\le \l_Q^{-s} D_{a,Q}^s(f).
\]
The same reasoning yields $\|g_{>Q}\|_b \le \l_Q^{-t} D_{b,Q}^t(g)$, and by H\"older, 
\[
\| f_{>Q}g_{>Q} \|_c \leq \l_Q^{-s-t} D_{a,Q}^s(f)D_{b,Q}^t(g).
\]
Next, we have
\begin{equation}
\label{ineq:f_q_diff}
\|f_q(\cdot - y) - f_q(\cdot)\|_a
	= \left\|\int_0^1 (\n f_q)(\cdot - \theta y)\cdot y\,d\theta \right\|_a 
	\le |y|\|\n f_q\|_a \lesssim |y| \l_q \|f_q\|_a.
\end{equation}
We use \eqref{ineq:f_q_diff} for $q\le Q$ in the following estimate:
\begin{align*}
\|f(\cdot - y) - f(\cdot)\|_a
	& \lesssim \l_Q^{1-s}\sum_{q\le Q} \l_Q^{s-1}\|f_q(\cdot - y) - f_q(\cdot)\|_a + \l_Q^{-s}\sum_{q>Q}\l_Q^s \|f_q(\cdot - y) - f_q(\cdot)\|_a\\
	& \lesssim \l_Q^{1-s}  \sum_{q\le Q}  \l_{Q-q}^{s-1} \l_q^{s-1} \cdot |y| \l_q \|f_q\|_a + \l_Q^{-s} \sum_{q>Q} \l_{Q-q}^s \l_q^s \|f_q\|_a \\
	& = \l_Q^{1-s} |y| \sum_{q\le Q}  K_{Q-q}^s d^s_{a,q}(f) + \l_Q^{-s} \sum_{q>Q} K_{Q-q}^s d^s_{a,q}(f) \\
	& \le (\l_Q |y| + 1)\l_Q^{-s}D_{a,Q}^s(f). 
\end{align*}
Clearly $\|g(\cdot - y) - g(\cdot)\|_b\le (\l_Q |y| + 1 ) \l_Q^{-t} D_{b,Q}^t(g)$, by the same argument.  Now we can easily estimate $r_Q(f,g)$:
\begin{align*}
\| r_Q(f,g) \|_{c}
	& \le \int |\widetilde{h}_Q(y)| \| f(\cdot -y)-f(\cdot)\|_a \|g(\cdot -y)-g(\cdot) \|_b \,dy \\
	& \lesssim \left( \int |\widetilde{h}_Q(y)| (\l_Q |y| + 1)^2 \,dy \right) \l_Q^{-s-t} D_{a,Q}^s(f) D_{b,Q}^t(g)\\
	& \lesssim \l_Q^{-s-t} D_{a,Q}^s(f) D_{b,Q}^t(g).
\end{align*}
This proves \eqref{ineq:commutator}.  The proof of \eqref{e:endpoint} follows the same lines, except we apply
$\| g_{>Q} \|_\infty \leq \|g\|_\infty$, and $\|g(\cdot - y) - g(\cdot)\|_\infty \leq 2 \|g\|_\infty$. The latter results in the term $(\l_Q |y| + 1)$ with power 1 inside the $h_Q$-integral, which is also bounded uniformly in $Q$.

Finally, we write
\begin{align*}
\|\n f_{\le Q} \|_a
	& \lesssim \l_Q^{1-s}\sum_{q\le Q} \l_Q^{s-1} \|\n f_q\|_a 
	\lesssim \l_Q^{1-s} \sum_{q\le Q} \l_{Q-q}^{s-1} \l_q^{s-1} \cdot \l_q \|f_q\|_a \\
	& = \l_Q^{1-s} \sum_{Q-q\ge 0} K_{Q-q}^s d_{a,q}^s(f)
	\le \l_Q^{1-s} D_{a,Q}^s(f).
\end{align*}
\end{proof}

\begin{PROP}
\label{p:Besov_Lp}
Let $f\in B^s_{a,\infty}$, $g\in B^s_{b,\infty}$,  $a,b\in [1,\infty]$, $s\in (0,1)$, $\frac1c=\frac1a + \frac1b$.  Then
\begin{equation}
\|\n(fg)_{\le Q}\|_c \lesssim \l_Q^{1-s} ( D_{a,Q}^s(f) \|g\|_b  + D_{b,Q}^s(g) \|f\|_a ).
\end{equation}
\end{PROP}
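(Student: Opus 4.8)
The plan is to split the product into a "diagonal" part and a commutator, treating each separately:
\[
\n(fg)_{\le Q} = \n\big(f_{\le Q}g_{\le Q}\big) + \n\big((fg)_{\le Q} - f_{\le Q}g_{\le Q}\big) =: I + II.
\]
For the diagonal term $I$, I expand by the Leibniz rule, $I = (\n f_{\le Q})\,g_{\le Q} + f_{\le Q}\,(\n g_{\le Q})$, and apply H\"older's inequality with $\frac1c = \frac1a+\frac1b$. Bounding $\|\n f_{\le Q}\|_a \lesssim \l_Q^{1-s}D_{a,Q}^s(f)$ via \eqref{ineq:n_f_le} and $\|g_{\le Q}\|_b \lesssim \|g\|_b$ via the uniform $L^b$-boundedness of the low-pass filter $g\mapsto \widetilde h_Q * g$ (the kernel $\widetilde h_Q$ has $L^1$-norm bounded independently of $Q$), the first summand is controlled by $\l_Q^{1-s}D_{a,Q}^s(f)\|g\|_b$ and, symmetrically, the second by $\l_Q^{1-s}D_{b,Q}^s(g)\|f\|_a$. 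Since \eqref{ineq:n_f_le} rests only on Bernstein's inequality $\|\n f_q\|_a \lesssim \l_q\|f_q\|_a$, it remains valid at the endpoint $a=\infty$, which is needed because the hypothesis permits $a,b\in[1,\infty]$.

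The key observation for $II$ is that the commutator $C_Q := (fg)_{\le Q} - f_{\le Q}g_{\le Q}$ has Fourier support in a ball $|\xi|\lesssim \l_Q$ (both $(fg)_{\le Q}$ and $f_{\le Q}g_{\le Q}$ are spectrally supported in $|\xi|\le 2\l_{Q+1}$). Hence Bernstein's inequality trades the derivative for a factor $\l_Q$, namely $\|\n C_Q\|_c \lesssim \l_Q\,\|C_Q\|_c$, and it suffices to prove the single-factor bound $\|C_Q\|_c \lesssim \l_Q^{-s}D_{a,Q}^s(f)\|g\|_b$. For this I reuse the decomposition $C_Q = r_Q(f,g) - f_{>Q}g_{>Q}$ from the proof of \prop{p:biest}, with $r_Q$ as in \eqref{eq:remainder2}. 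The term $f_{>Q}g_{>Q}$ is handled by $\|f_{>Q}g_{>Q}\|_c \le \|f_{>Q}\|_a\|g_{>Q}\|_b$, using \eqref{ineq:>Q} on the first factor and $\|g_{>Q}\|_b \lesssim \|g\|_b$ on the second. For $r_Q$ I bound the integrand pointwise in $y$ by $\|f(\cdot-y)-f(\cdot)\|_a\,\|g(\cdot-y)-g(\cdot)\|_b$, estimating the first increment by $(\l_Q|y|+1)\l_Q^{-s}D_{a,Q}^s(f)$ (the increment bound established inside the proof of \prop{p:biest}) and the second simply by $\|g(\cdot-y)-g(\cdot)\|_b \le 2\|g\|_b$; the ensuing $\int|\widetilde h_Q(y)|(\l_Q|y|+1)\,dy$ is uniformly bounded in $Q$, exactly as in the proof of \eqref{e:endpoint}.

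The only genuine subtlety is the bookkeeping that yields the mixed form $D_{a,Q}^s(f)\|g\|_b$ rather than the fully symmetric $\l_Q^{-2s}D_{a,Q}^s(f)D_{b,Q}^s(g)$ that \eqref{ineq:commutator} would give: the gain comes precisely from the asymmetric H\"older split in which one factor retains only its plain $L^b$-norm, sacrificing one power of $\l_Q^{-s}$ in exchange for the $\|g\|_b$ factor and, incidentally, accommodating the endpoints $a,b\in\{1,\infty\}$ excluded from \prop{p:biest}. Combining the estimates for $I$ and $II$ then gives $\|\n(fg)_{\le Q}\|_c \lesssim \l_Q^{1-s}\big(D_{a,Q}^s(f)\|g\|_b + D_{b,Q}^s(g)\|f\|_a\big)$, as claimed.
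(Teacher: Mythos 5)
Your proof is correct, but it handles the off-diagonal part by a genuinely different route than the paper. The paper starts from the Fourier-support observation that $(f_pg_r)_{\le Q}=0$ unless $p,r\le Q+2$ or $|p-r|\le 2$, writes $(fg)_{\le Q}=(f_{\le Q+2}g_{\le Q+2})_{\le Q}+\sum_{\max\{p,r\}>Q+2,\,|p-r|\le 2}(f_pg_r)_{\le Q}$, and then estimates the gradient of each piece separately: the low-low term by Leibniz, H\"older and \eqref{ineq:n_f_le} (exactly your term $I$), the cross terms $f_{\sim Q}g_{\le Q}$ and $f_{\le Q}g_{\sim Q}$ individually, and the high-high diagonal sum by applying Bernstein to each $(f_pg_r)_{\le Q}$ and summing the resulting geometric tail into $D^s_{a,Q}(f)\|g\|_b$. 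You instead lump everything beyond the low-low term into the commutator $C_Q=(fg)_{\le Q}-f_{\le Q}g_{\le Q}$, exploit its spectral localization at frequencies $\lesssim\l_Q$ to apply Bernstein \emph{once}, and then bound $\|C_Q\|_c$ by recycling the $r_Q(f,g)-f_{>Q}g_{>Q}$ representation from Proposition~\ref{p:biest} with the asymmetric H\"older split that keeps $\|g\|_b$ in place of a second Besov factor --- i.e.\ you are really proving an $L^a\times L^b\to L^c$ analogue of \eqref{e:endpoint} and differentiating afterwards. Your route is shorter, avoids the paraproduct bookkeeping, and makes explicit why the endpoints $a,b\in\{1,\infty\}$ (excluded from Proposition~\ref{p:biest} but permitted here) cause no trouble, since only Bernstein, H\"older, and the $L^1$-boundedness of $\widetilde h_Q$ are used; the paper's version stays entirely on the Fourier side and transfers more directly to the variant recorded in Remark~\ref{r:pressure}. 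Both arguments rest on the same two tools and yield the same bound.
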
 
\begin{proof}
First, notice that if $p$ or $r$ is greater than $Q+2$ and $|p-r|>2$, then the Fourier support of $f_p g_r$ lies outside the ball of radius $\l_{Q+1}$ centered at $0$.  In particular, $(f_p g_r)_{\le Q}$ vanishes.  Therefore 
\[
(fg)_{\le Q} = (f_{\le Q+2}g_{\le Q+2})_{\le Q} + \sum_{\substack{\max\{p,r\}>Q+2 \\ |p-r|\le 2}} (f_p g_r)_{\le Q},
\]
so we have
\begin{equation}
\label{ineq:grad_fgQ_decomp}
\|\n (fg)_{\le Q}\|_c \le \|\n (f_{\le Q}g_{\le Q})\|_c + \|\n (f_{\sim Q} g_{\le Q} + f_{\le Q} g_{\sim Q}) \|_c + \sum_{\substack{ p,r>Q \\ |p-r|\le 2 }} \|\n (f_p g_r)_{\le Q}\|_c.
\end{equation}
We estimate each of the terms on the right side of this inequality.  First, we have 
\begin{equation}
\label{ineq:grad_fQgQ}
\|\n (f_{\le Q} g_{\le Q})\|_c 
	\le \|\n f_{\le Q}\|_a \|g\|_b + \|\n g_{\le Q}\|_b \|f\|_a \\
	\lesssim \l_Q^{1-s} \big( D_{a,Q}^s(f) \|g\|_b + D_{b,Q}^s(g) \|f\|_a \big).
\end{equation}
Next, 
\begin{align*}
\|\n (f_{\sim Q}g_{\le Q})\|_c
& \lesssim \|\n f_{\sim Q}\|_a \|g\|_b + \|\n g_{\le Q}\|_b \|f\|_a\\
& \lesssim \l_Q^{1-s} \big( D_{a,Q}^s(f) \|g\|_b + D_{b,Q}^s(g) \|f\|_a\big),
\end{align*}
where we note that  $\|\n f_{\sim Q}\|_a \sim \l_Q^{1-s}D_{a,Q}^s(f)$ and use \eqref{ineq:n_f_le} in order to obtain the second inequality.
We can estimate $\|\n(f_{\le Q}g_{\sim Q})\|_c$ similarly, concluding that 
\begin{equation}
\label{ineq:grad_fsimgQ}
\begin{split}
\|\n (f_{\sim Q} g_{\le Q} + f_{\le Q} g_{\sim Q}) \|_c
\lesssim \l_Q^{1-s} ( D_{a,Q}^s(f) \|g\|_b + D_{b,Q}^s(g) \|f\|_a ).
\end{split}
\end{equation}
By differential Bernstein's and H\"older inequalities we have
\[
\|\n (f_p g_r)_{\le Q}\|_c
	\lesssim \l_Q \|f_p\|_a \|g_r\|_b
\]
Using this we obtain 
\begin{align*}
\sum_{\substack{ p,r>Q \\ |p-r|\le 2 }} \|\n (f_p g_r)_{\le Q}\|_c
& \lesssim \l_Q^{1-s} \sum_{p>Q}\l_{Q-q}^s \l_q^s \|f_p\|_a \|g\|_b \le \l_Q^{1-s} D_{a,Q}^s(f)\|g\|_b.
\end{align*}
Combining this estimate with \eqref{ineq:grad_fgQ_decomp}, \eqref{ineq:grad_fQgQ}, and \eqref{ineq:grad_fsimgQ} immediately yields the desired statement.
\end{proof}
\begin{remark}\label{r:pressure}
One can also show (by a proof nearly identical to the above) that if $f,g\in B_{a,\infty}^s\cap L^b$ with $\frac1a + \frac1b=\frac1c$ and $a,b\in [1,\infty]$, then $\|\n(fg)_{\le Q}\|_c \lesssim \l_Q^{1-s} ( D_{a,Q}^s(f) \|g\|_b  + D_{a,Q}^s(g) \|f\|_b )$.
\end{remark}
\begin{remark}
Recall the following result for the classical Navier-Stokes equations (i.e. \eqref{eq:momentum} and \eqref{eq:incompressibility}, with $\rho\equiv 1$, $f\equiv 0$): If $(u,p)$ is a weak solution, with $u\in C^\alpha$ for some $\alpha\in (0,1)$, then $p=\Delta^{-1}(\diver \diver(u\otimes u))\in C^\alpha$.  We can generalize this result using Proposition~\ref{p:Besov_Lp}: Assume $u\in B^s_{a,\infty}$, with $a\in [2,\infty]$ and $s\in (0,1)$; then $p\in B^s_{a/2,\infty}$.  Indeed, we have 
\[
\l_Q^s \|p_Q\|_{a/2} \sim \l_Q^{-(1-s)}\|\diver (u\otimes u)_Q\|_{a/2} \lesssim (D_{a,Q}^s(u))^2.
\]
This observation motivates our integrability assumption on $p$ in Theorem~\ref{t:energy_bal}.
\end{remark}

\section{Estimates on the Flux}

First, we give a decomposition of $F_Q(\rho, u)$ which is more conducive to estimates.  In order to do so we define, in analogy with \eqref{eq:remainder2}, the quantity
\[
r_Q(\rho, u,u)
= \int \widetilde{h}_Q(y) [\rho(x-y)-\rho(x)][u(x-y)-u(x)]\otimes [u(x-y)-u(x)]\,dy.
\]
\begin{lemma}
$F_Q(\rho, u)$ can be written as
\begin{equation}
\label{eq:commutator_alt}
\begin{split}
F_Q(\rho, u) = & r_Q(\rho, u, u) - \frac{1}{\rho_{\le Q}}[(\rho u)_{\le Q} - \rho_{\le Q} u_{\le Q}]\otimes [(\rho u)_{\le Q} - \rho_{\le Q} u_{\le Q}] + \rho_{> Q}u_{>Q}\otimes u_{>Q} \\ & + 2 Sym([(\rho u)_{\le Q} - \rho_{\le Q} u_{\le Q}] \otimes u_{>Q}) + \rho[(u\otimes u)_{\le Q} - u_{\le Q} \otimes u_{\le Q}].
\end{split}
\end{equation}
\end{lemma}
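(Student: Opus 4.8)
The plan is to prove the identity by a direct algebraic computation, reducing both sides to expressions built only from the filtered pieces $\rho_{\le Q}, u_{\le Q}, (\rho u)_{\le Q}$, their high-frequency complements $\rho_{>Q}, u_{>Q}$, and the momentum commutator $c := (\rho u)_{\le Q} - \rho_{\le Q} u_{\le Q}$. The starting point is that, since $U = (\rho u)_{\le Q}/\rho_{\le Q}$, the subtracted term in the definition \eqref{eq:commutator} of $F_Q$ is $U \otimes (\rho u)_{\le Q} = \frac{1}{\rho_{\le Q}}(\rho u)_{\le Q}\otimes(\rho u)_{\le Q}$. Writing $(\rho u)_{\le Q} = \rho_{\le Q}u_{\le Q} + c$ and expanding the tensor square yields
\begin{equation*}
U\otimes(\rho u)_{\le Q} = \rho_{\le Q}\, u_{\le Q}\otimes u_{\le Q} + 2\,\mathrm{Sym}(u_{\le Q}\otimes c) + \frac{1}{\rho_{\le Q}}\, c\otimes c,
\end{equation*}
which already isolates the term $-\frac{1}{\rho_{\le Q}}c\otimes c$ of \eqref{eq:commutator_alt}.

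The second ingredient is an explicit expansion of the triple remainder $r_Q(\rho,u,u)$. Expanding the product $[\rho(x-y)-\rho(x)][u(x-y)-u(x)]\otimes[u(x-y)-u(x)]$ into its eight monomials, integrating each against $\widetilde{h}_Q(y)\,dy$, and using the two facts $\int\widetilde{h}_Q(y)\,dy = 1$ and $\int\widetilde{h}_Q(y)\,g(x-y)\,dy = g_{\le Q}(x)$ already established in the proof of Proposition~\ref{p:biest} (so that every unshifted factor leaves the integral as a full function while every shifted factor is replaced by its low-pass projection), one obtains
\begin{equation*}
\begin{split}
r_Q(\rho,u,u) = (\rho u\otimes u)_{\le Q} &- (\rho u)_{\le Q}\otimes u - u\otimes(\rho u)_{\le Q} - \rho\,(u\otimes u)_{\le Q} \\
&+ \rho_{\le Q}\,(u\otimes u) + \rho\,(u_{\le Q}\otimes u) + \rho\,(u\otimes u_{\le Q}) - \rho\,(u\otimes u).
\end{split}
\end{equation*}
Solving this for $(\rho u\otimes u)_{\le Q}$ expresses the leading term of $F_Q$ through $r_Q(\rho,u,u)$ together with products of full and filtered factors.

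Finally, I would substitute both expansions into $F_Q = (\rho u\otimes u)_{\le Q} - U\otimes(\rho u)_{\le Q}$ and resolve each remaining full factor into its low and high parts via $\rho = \rho_{\le Q}+\rho_{>Q}$, $u = u_{\le Q}+u_{>Q}$, together with $(\rho u)_{\le Q} = \rho_{\le Q}u_{\le Q} + c$. Collecting, the numerous mixed products cancel in pairs, leaving precisely $r_Q(\rho,u,u) - \frac{1}{\rho_{\le Q}}c\otimes c + \rho_{>Q}u_{>Q}\otimes u_{>Q} + 2\,\mathrm{Sym}(c\otimes u_{>Q}) + \rho[(u\otimes u)_{\le Q} - u_{\le Q}\otimes u_{\le Q}]$, which is \eqref{eq:commutator_alt}. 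The only genuine difficulty is the bookkeeping: one must respect the order of the tensor slots, since the products are not symmetric and $u_{\le Q}\otimes c$ must be tracked separately from $c\otimes u_{\le Q}$, and one must organize the expansion so that the cancellations producing the compact terms $\rho_{>Q}u_{>Q}\otimes u_{>Q}$ and $2\,\mathrm{Sym}(c\otimes u_{>Q})$ are transparent. As a sanity check, setting $\rho\equiv 1$ (so $\rho_{\le Q}=1$, $\rho_{>Q}=0$, $c=0$, and $r_Q(\rho,u,u)=0$) collapses the identity to the classical Constantin--E--Titi commutator $F_Q = (u\otimes u)_{\le Q} - u_{\le Q}\otimes u_{\le Q}$, confirming the slot conventions and signs.
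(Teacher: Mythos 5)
Your proposal is correct and follows essentially the same route as the paper: both expand $r_Q(\rho,u,u)$ into its constituent low-pass/full products and expand $\rho_{\le Q}^{-1}(\rho u)_{\le Q}\otimes(\rho u)_{\le Q}$ via the commutator $c=(\rho u)_{\le Q}-\rho_{\le Q}u_{\le Q}$, then cancel. Your displayed expansions are accurate (the tensor slots are handled correctly), and although you assert rather than carry out the final collection, it does close up exactly as claimed, with the $\rho_{>Q}u_{>Q}\otimes u_{>Q}$ term arising from $(\rho_{\le Q}+\rho_{>Q}-\rho)u_{>Q}\otimes u_{>Q}$-type regroupings.
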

\begin{proof}
We can write
\begin{align*}
r_Q(\rho, u,u)
	& = (\rho u\otimes u)_{\le Q} - 2Sym[(\rho u)_{\le Q} \otimes u] + \rho_{\le Q}u\otimes u -\rho r_Q(u,u)\\
	& =(\rho u\otimes u)_{\le Q} - 2Sym[((\rho u)_{\le Q}-\rho_{\le Q}u_{\le Q}) \otimes u] \\
	& \hspace{5 mm} + \rho_{\le Q} (u\otimes u - u_{\le Q} \otimes u - u\otimes u_{\le Q}) - \rho r_Q(u,u)\\
	& =(\rho u\otimes u)_{\le Q} - 2Sym[((\rho u)_{\le Q}-\rho_{\le Q}u_{\le Q}) \otimes u] \\
	& \hspace{5 mm} - \rho_{\le Q}u_{\le Q} \otimes u_{\le Q} + \rho_{\le Q} u_{>Q}\otimes u_{>Q}  -  \rho r_Q(u,u)\\ 
	& =(\rho u\otimes u)_{\le Q} - 2Sym[((\rho u)_{\le Q}-\rho_{\le Q}u_{\le Q}) \otimes u] - \rho_{\le Q}u_{\le Q} \otimes u_{\le Q} \\
	& \hspace{5 mm} - \rho[(u\otimes u)_{\le Q} - u_{\le Q} \otimes u_{\le Q}] - \rho_{>Q}u_{>Q}\otimes u_{>Q}, 
\end{align*}
where $Sym$ denotes the symmetric part. Therefore
\begin{align*}
(\rho u\otimes u)_{\le Q}
	& = r_Q(\rho, u, u) + 2 Sym([(\rho u)_{\le Q} - \rho_{\le Q} u_{\le Q}] \otimes u) \\ 
	& \hspace{5 mm} + \rho [ (u\otimes u)_{\le Q} - u_{\le Q} \otimes u_{\le Q}] + \rho_{\le Q}u_{\le Q} \otimes u_{\le Q} + \rho_{> Q}u_{>Q}\otimes u_{>Q}. \\
\end{align*}
Since we also have 
\begin{align*}
\frac{(\rho u)_{\le Q} \otimes (\rho u)_{\le Q}}{\rho_{\le Q}}
	& = \frac{1}{\rho_{\le Q}}[(\rho u)_{\le Q} - \rho_{\le Q} u_{\le Q}]\otimes [(\rho u)_{\le Q} - \rho_{\le Q} u_{\le Q}] \\
	& \hspace{5 mm} + 2Sym([(\rho u)_{\le Q} - \rho_{\le Q} u_{\le Q}]\otimes u_{\le Q}) + \rho_{\le Q} u_{\le Q}\otimes u_{\le Q},
\end{align*}
subtracting the right sides of the last two equations gives the desired representation.
\end{proof}

\begin{theorem}
\label{t:flux_to0}
Assume that $0< \underline{\rho}\le \rho \le \overline{\rho} <\infty$ and that $(\rho, u, p)$ satisfies 
\begin{equation}
\label{hyp:Onsager2space}
\rho \in B_{a,\infty}^{1/3},\;
u\in B_{b,c_0}^{1/3},\;
p\in B^{1/3}_{b/2,\infty}, \quad \frac1a + \frac3b=1,\; a\ge 2.
\end{equation}
Then the flux $\Pi_Q$ defined by \eqref{eq:energy_flux} tends to zero as $Q\to \infty$. 
\end{theorem}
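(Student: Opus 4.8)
The plan is to estimate the two pieces of the flux \eqref{eq:energy_flux} separately, bounding $\int F_Q(\rho,u):\n U\,dx$ and the pressure term $\int p_{\le Q}\diver U\,dx$, and to show that each is controlled by a product of the quantities $D^{1/3}_{\cdot,Q}(\cdot)$ in which at least one factor is $D^{1/3}_{b,Q}(u)$. Since $u\in B^{1/3}_{b,c_0}$ forces $d^{1/3}_{b,q}(u)\to 0$, hence $D^{1/3}_{b,Q}(u)\to 0$ by \eqref{eq:D&d_comp}, while the remaining factors $D^{1/3}_{a,Q}(\rho)$, $D^{1/3}_{b/2,Q}(p)$, $\|u\|_b$, $\|\rho\|_a$ stay bounded in $Q$, this forces $\Pi_Q\to 0$. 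Throughout I use $\underline{\rho}\le \rho\le \overline{\rho}$ and that $\rho_{\le Q}$ is bounded below by a positive constant for $Q$ large, so that $U=(\rho u)_{\le Q}/\rho_{\le Q}$ and $1/\rho_{\le Q}$ are under control.

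For the nonlinear piece I would feed the decomposition \eqref{eq:commutator_alt} of $F_Q$ into Proposition~\ref{p:biest}. The key simplification is to route every density increment or projection through $\|\rho\|_\infty\le\overline{\rho}$ rather than through its Besov norm: estimating the $\rho$-increment in $r_Q(\rho,u,u)$ by $2\overline{\rho}$, applying the endpoint bound \eqref{e:endpoint} (with the $L^\infty$ factor equal to $\rho$) to $(\rho u)_{\le Q}-\rho_{\le Q}u_{\le Q}$, bounding $\rho_{>Q}$ by $2\overline{\rho}$, and using $\rho\in L^\infty$ on the last term, each of the five terms lands in $L^{b/2}$ with the single scaling $\|F_Q\|_{b/2}\lesssim \l_Q^{-2/3}(D^{1/3}_{b,Q}(u))^2$. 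For the factor $\n U$ I use the quotient rule $\n U=\rho_{\le Q}^{-1}\n(\rho u)_{\le Q}-\rho_{\le Q}^{-2}(\rho u)_{\le Q}\otimes \n\rho_{\le Q}$, estimating the first term by Proposition~\ref{p:Besov_Lp} (with $f=\rho$, $g=u$, $\frac1c=\frac1a+\frac1b=1-\frac2b$) and the second by H\"older together with \eqref{ineq:n_f_le} for $\n\rho_{\le Q}$ and boundedness of the projection on $L^b$ for $(\rho u)_{\le Q}$, obtaining $\|\n U\|_{(b/2)'}\lesssim \l_Q^{2/3}(D^{1/3}_{a,Q}(\rho)\|u\|_b+D^{1/3}_{b,Q}(u)\|\rho\|_a)$. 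Note $(b/2)'$ is the conjugate exponent to $b/2$ precisely because $\frac1a+\frac1b=1-\frac2b$; this is where $\frac1a+\frac3b=1$ enters. Pairing the two by H\"older, the powers $\l_Q^{-2/3}$ and $\l_Q^{2/3}$ cancel and the resulting bound carries at least two factors $D^{1/3}_{b,Q}(u)$.

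The pressure term is the delicate point, because a naive estimate of $\diver U$ or of $\int \n p_{\le Q}\cdot U\,dx$ produces a growing factor $\l_Q^{2/3}$ with no compensating smallness, the pressure lying only in $B^{1/3}_{b/2,\infty}$ and not in the $c_0$ class. The resolution is to integrate by parts and exploit that $u_{\le Q}$ is divergence free: writing $\int p_{\le Q}\diver U\,dx=-\int \n p_{\le Q}\cdot U\,dx$ and subtracting the identically vanishing quantity $\int \n p_{\le Q}\cdot u_{\le Q}\,dx=-\int p_{\le Q}\diver u_{\le Q}\,dx=0$, one is left, since $U-u_{\le Q}=\rho_{\le Q}^{-1}[(\rho u)_{\le Q}-\rho_{\le Q}u_{\le Q}]$, with
\[
\int p_{\le Q}\diver U\,dx=-\int \n p_{\le Q}\cdot \frac{(\rho u)_{\le Q}-\rho_{\le Q}u_{\le Q}}{\rho_{\le Q}}\,dx.
\]
Now the commutator supplies the missing $\l_Q^{-2/3}$: by \eqref{ineq:commutator} it is $\lesssim \l_Q^{-2/3}D^{1/3}_{a,Q}(\rho)D^{1/3}_{b,Q}(u)$ in $L^{(b/2)'}$, while $\|\n p_{\le Q}\|_{b/2}\lesssim \l_Q^{2/3}D^{1/3}_{b/2,Q}(p)$ by \eqref{ineq:n_f_le}; H\"older then gives $|\int p_{\le Q}\diver U\,dx|\lesssim D^{1/3}_{b/2,Q}(p)\,D^{1/3}_{a,Q}(\rho)\,D^{1/3}_{b,Q}(u)$, again with a decaying $D^{1/3}_{b,Q}(u)$.

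Collecting the two bounds and letting $Q\to\infty$, the boundedness of $D^{1/3}_{a,Q}(\rho)$, $D^{1/3}_{b/2,Q}(p)$, $\|u\|_b$, $\|\rho\|_a$ together with $D^{1/3}_{b,Q}(u)\to 0$ yields $\Pi_Q\to 0$. The main obstacle, as indicated, is the pressure term: one must recognize the divergence-free cancellation in order to trade the harmful $\l_Q^{2/3}$ for the commutator's $\l_Q^{-2/3}$. A secondary subtlety is the need to pass all density factors through $L^\infty$, so that $\n U$ is estimated in $L^{(b/2)'}$ rather than in $L^b$ (the latter would demand more regularity of $\rho$ than is available when $a<b$), and to secure a uniform positive lower bound on $\rho_{\le Q}$.
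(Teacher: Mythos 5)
Your proposal is correct and follows essentially the same route as the paper's own proof: the same decomposition \eqref{eq:commutator_alt} with all density factors passed through $L^\infty$, the same H\"older pairing of $F_Q\in L^{b/2}$ against $\n U\in L^{(b/2)'}$ via the quotient rule and Propositions~\ref{p:biest}--\ref{p:Besov_Lp}, and the same integration by parts against the divergence-free $u_{\le Q}$ to convert the pressure term into the commutator $(\rho u)_{\le Q}-\rho_{\le Q}u_{\le Q}$. The concluding limit argument via \eqref{eq:D&d_comp} is also identical.
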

\begin{proof}

Clearly
\[
\|r_Q(\rho, u, u)\|_{b/2} \lesssim \int |\widetilde{h}_Q(y)| \| u(\cdot -y)-u(\cdot)\|_b^2 \,dy,
\]
and we can follow the proof of Proposition~\ref{p:biest} to conclude $\|r_Q(\rho, u, u)\|_{b/2} \lesssim \l_Q^{-2/3}(D_{b,Q}^{1/3}(u))^2$.

Using \eqref{e:endpoint}, we can estimate 
\[
\left\| \frac{1}{\rho_{\le Q}}[(\rho u)_{\le Q} - \rho_{\le Q} u_{\le Q}]\otimes [(\rho u)_{\le Q} - \rho_{\le Q} u_{\le Q}]\right\|_{b/2}
\lesssim \underline{\rho}^{-1}(\l_Q^{-1/3} D_{b,Q}^{1/3}(u) \overline{\rho} )^2 \lesssim \l_Q^{-2/3} (D_{b,Q}^{1/3}(u))^2
\]
Using $\|\rho_{>Q}\|\le \overline{\rho}$ and \eqref{ineq:>Q}, we get
\[
\|\rho_{>Q} u_{>Q}\otimes u_{>Q}\|_{b/2} \lesssim \l_Q^{-2/3}(D_{b,Q}^{1/3}(u))^2.
\]
Combining \eqref{e:endpoint} and \eqref{ineq:>Q} yields 
\[
\| [(\rho u)_{\le Q} - \rho_{\le Q} u_{\le Q}] \otimes u_{>Q} \|_{b/2} \le (\l_Q^{-1/3} D_{b,Q}^{1/3}(u)\overline{\rho})(\l_Q^{-1/3}D_{b,Q}^{1/3}(u)) \lesssim \l_Q^{-2/3} (D_{b,Q}^{1/3}(u))^2
\]
Finally, 
\[
\|\rho[(u\otimes u)_{\le Q} - u_{\le Q}\otimes u_{\le Q}]\|_{b/2} \lesssim \overline{\rho} \l^{-2/3}(D_{b,Q}^{1/3}(u))^2 \lesssim \l^{-2/3}(D_{b,Q}^{1/3}(u))^2.
\]
Therefore, 
\[
\|F_Q(\rho, u)\|_{b/2} \lesssim \l_Q^{-2/3}(D_{b,Q}^{1/3}(u))^2.
\]

We also have $\n U = \rho_{\le Q}^{-1} \n(\rho u)_{\le Q} - \rho_{\le Q}^{-2} (\rho u)_{\le Q} \otimes \n \rho_{\le Q}$.  Write $\frac1a+\frac1b=\frac1c$.  Then using the two Propositions of the previous section, we estimate:
\begin{align*}
\|\n U\|_c
	& \lesssim \|\n (\rho u)_{\le Q}\|_c + \|\rho u\|_b \|\n \rho_{\le Q}\|_a 
\lesssim \l_Q^{2/3} \big( D_{a,Q}^{1/3}(\rho) \|u\|_b + D_{b,Q}^{1/3}(u) \big).
\end{align*}
Therefore 
\begin{equation}
\int F_Q(\rho, u):\n U dx
	\lesssim (D_{b,Q}^{1/3}(u))^2 \big( D_{a,Q}^{1/3}(\rho) \|u\|_b + D_{b,Q}^{1/3}(u) \big)
\end{equation}

Next, we deal with the pressure term.  Note that by \eqref{eq:wk_incompressibility}, we have 
\[
\int p_{\le Q} \diver U\,dx = -\int \n p_{\le Q} \cdot (U - u_{\le Q})\,dx.
\]
So 
\begin{align*}
\int_{\T^d} p_{\le Q} \diver U dx
	& \lesssim \| \n p_{\le Q} \|_{b/2} \|(\rho u)_{\le Q} - \rho_{\le Q}u_{\le Q} \|_c \\
	& \lesssim \l_Q^{2/3} D_{b/2,Q}^{1/3}(p) \cdot \l^{-2/3} D_{a,Q}^{1/3}(\rho) D_{b,Q}^{1/3}(u)
	= D_{a,Q}^{1/3}(\rho) D_{b,Q}^{1/3}(u) D_{b/2,Q}^{1/3}(p).
\end{align*}
Thus 
\begin{equation*}
|\Pi_Q|
	\lesssim D_{b,Q}^{1/3}(u)\bigg[ D_{b,Q}^{1/3}(u) \big(D_{a,Q}^{1/3}(\rho)\|u\|_b + D_{b,Q}^{1/3}(u) \big) + D_{a,Q}^{1/3}(\rho) D_{b/2,Q}^{1/3}(p)\bigg].
\end{equation*}
In view of \eqref{eq:D&d_comp} and our assumptions on $\rho, u, p$, the bracketed term in each estimate is uniformly bounded in $Q$, while $D_{b,Q}^{1/3}(u)$ tends to zero as $Q\to \infty$.  Therefore $\lim_{Q\to \infty} \Pi_Q = 0$, as claimed.
\end{proof}

Note that we obtain Theorem~\ref{t:EE_result} as a Corollary: By Theorem~\ref{t:flux_to0}, as well as  \eqref{eq:D&d_comp} and the Dominated Convergence Theorem, we have
\[
E_{\le Q}(t) - E_{\le Q}(0) = 
\int_0^t \Pi_Q(s)\,ds \stackrel{Q\to \infty}{\longrightarrow} 0.
\]
Now we prove Theorem~\ref{t:energy_bal}:
\begin{proof}
As noted above, we have $\int_0^t \Pi_Q(s)\,ds \to 0$.
It remains to show $\varepsilon_Q(t)\to \varepsilon(t)$ and $\int_0^t\int (\rho f)_{\le Q} \cdot U\,dx\,ds \to \int_0^t \int \rho u\cdot f\,dx\,ds$. So, let us make the following observation:
\[
\int \n u_{\leq Q} : \n U dx = \int \n u_{\leq Q} : \n (U - u_{\leq Q}) dx + \| \n u_{\leq Q}\|_2^2.
\]
Clearly, 
\[
\int_0^t  \| \n u_{\leq Q}(s)\|_2^2 ds \ra \int_0^t  \| \n u(s)\|_2^2 ds.
\]
Next, 
\[
\int \n u_{\leq Q} : \n (U - u_{\leq Q}) dx = -\int \D u_{\leq Q} : ((\rho u)_{\leq Q} - \rho_{\le Q} u_{\leq Q} ) \rho_{\leq Q}^{-1} dx.
\]
Using \eqref{e:endpoint} and the remark following Proposition~\ref{p:biest} we estimate
\[
\left| \int \D u_{\leq Q} : ((\rho u)_{\leq Q} - \rho_{\leq Q} u_{\leq Q} ) \rho_{\leq Q}^{-1} dx \right| \leq \| \D u_{\leq Q} \|_2 \l_Q^{-1} \| u\|_{H^1} \|\rho\|_\infty.
\]
Then 
\[
\| \D u_{\leq Q} \|_2 \l_Q^{-1} \leq \left( \sum_{q \leq Q} \l_{q-Q}^2 \| \n u_q\|_2^2 \right)^{1/2}.
\]
Since the latter vanishes as $Q \ra \infty$ a.e. in time and is uniformly bounded by the dominant $H^1$-norm of $u$ we obtain
\[
\int_0^t \| \D u_{\leq Q} \|_2 \l_Q^{-1} \| u\|_{H^1} ds \leq \|u\|_{L^2H^1} \left(\int_0^t  \sum_{q \leq Q} \l_{q-Q}^2 \| \n u_q\|_2^2  ds \right)^{1/2}\ra 0.
\]
Finally, the convergence $\int_0^t\int (\rho f)_{\le Q} \cdot U\,dx\,ds \to \int_0^t \int \rho u\cdot f\,dx\,ds$ is rather straightforward. Indeed, write
\[
\rho f \cdot u  - (\rho f)_{\le Q}\cdot U
	= (\rho f - (\rho f)_{\le Q}) u + (\rho f)_{\le Q}(u - U).
\]	
Note that $u, \rho f \in L^2_{t,x}$, hence $(\rho f)_{\le Q} \ra \rho f$ strongly in $L^2_{t,x}$, and hence $\int (\rho f - (\rho f)_{\le Q}) u \ra 0$. Similarly, $u - U = \frac{1}{\rho_{\leq Q}} (\rho_{\leq Q} u - (\rho u)_{\leq Q})=\frac{1}{\rho_{\leq Q}} (\rho_{\leq Q} u_{\leq Q} - (\rho u)_{\leq Q})  + u_{>Q}$. Again, $u_{>Q} \ra 0$ in $L^2_{t,x}$, while for the difference $\rho_{\leq Q} u_{\leq Q} - (\rho u)_{\leq Q}$ we can use \eqref{e:endpoint} with $s=1$, $a=2$ to conclude that it also tends to zero in $L^2_{t,x}$. This finishes the proof.
 
\end{proof}

\begin{remark} Let us discuss a few extensions. First,
one can see from the proof that the full strength of the integrability in time assumption on $u$ was not used. Rather, the hypothesis $u\in L^b(0,T;B^{1/3}_{b,\infty})$ can be replaced by the weaker assumption that 
\[
\lim_{q \to 0} \int_0^T \l_q^{b/3}\|u_q\|_b^b\,ds = 0.
\]
This is equivalent to a space-time averaged increment condition
\[
\lim_{y \ra 0} \frac{1}{|y|^{b/3}} \int_{\T^d \times [0,T]} |u(x+y,t) - u(x,t)|^b dx dt = 0.
\]
Second, time integrability in \eqref{hyp:Onsager2} can be replaced with its own exponents
\[
\rho\in L^{a'}B_{a,\infty}^{\frac13},\;
u\in L^{b'}B_{b,c_0}^{\frac13},\;
p\in L^{\frac{b'}{2}}B^{\frac13}_{\frac{b}{2},\infty},\quad \frac1a + \frac3b=1,\; \frac{1}{a'} + \frac{3}{b'}=1.
\]
Finally, it appears possible to extend the results to the system with density-dependent kinematic viscosity $\mu = \mu(\rho)$ with sufficiently smooth $\mu$. We leave calculations pertaining to this case to future research.
\end{remark}

%\bibliographystyle{plain}
%\bibliography{khm}

\end{document}